\documentclass[oneside,english]{amsart}
\usepackage[T1]{fontenc}
\usepackage[latin9]{inputenc}
\usepackage{amsthm}
\usepackage{amssymb}
\usepackage{stmaryrd}

\makeatletter
\numberwithin{equation}{section}
\numberwithin{figure}{section}
\theoremstyle{plain}
\newtheorem{thm}{\protect\theoremname}
\theoremstyle{plain}
\newtheorem{lem}[thm]{\protect\lemmaname}
\theoremstyle{plain}
\newtheorem{cor}[thm]{\protect\corollaryname}

\makeatother

\usepackage{babel}
\providecommand{\corollaryname}{Corollary}
\providecommand{\lemmaname}{Lemma}
\providecommand{\theoremname}{Theorem}

\begin{document}

\title{On Uniform Approximations of Normal Distributions By Jacobi Theta
Functions}

\author{Ruiming Zhang}

\address{College of Science\\
Northwest A\&F University\\
Yangling, Shaanxi 712100\\
P. R. China.}

\email{ruimingzhang@yahoo.com}

\keywords{theta function; normal distributions.}

\thanks{The work is supported by the National Natural Science Foundation
of China grants No. 11371294 and No. 11771355. The author also thanks
Professor Mourad E. H. Ismail for mentioning this problem to him in
a private communication.}

\subjclass[2000]{33D05; 33C45.}
\begin{abstract}
In this short note we study uniform approximations to the normal distributions
by Jacobi theta functions. We shall show that scaled theta functions
approach to a normal distribution exponentially fast. 
\end{abstract}

\maketitle

\section{Introduction}

In probability theory, under mild conditions such as the existence
of second moment of the distribution, the central limit theorem (CLT)
establishes when the normalized sum of independent random variables
tends toward a normal distribution even if the original variables
themselves are not normally distributed. The theorem is a key concept
in probability theory because it implies that probabilistic and statistical
methods that work for normal distributions can be applicable to many
problems involving other types of distributions, \cite{Durret,Grinstead}.
Therefore, given an nonnegative integrable special function on the
real line, it is interesting to know under what scaling it approximates
a normal distribution, and how fast the approximation is. The Jacobi
theta functions are fundamental special functions in both mathematics
and physics, for example, please see Wikipedia(https://en.wikipedia.org/wiki/Theta\_function)
and DLMF(https://dlmf.nist.gov/20). In probability theory many important
probability density functions are also weight functions for q-orthogonal
polynomials, \cite{Szab=000142owski1,Szab=000142owski2}, and some
of them are expressible in terms of Jacobi theta functions. For example,
in \cite{Szab=000142owski2} the q-Gaussian density is expressed as
a product of $\theta_{3}$ and $\theta_{2}$. In this short note we
shall prove that scaled Jacobi theta functions tend to a normal distribution
exponentially fast. 

For all $m\in\mathbb{N}$ and $a,a_{1},\dots a_{m},q\in\mathbb{C}$
with $|q|<1$, let \cite{Andrews,Berndt} 
\begin{equation}
(a;q)_{\infty}=\prod_{k=0}^{\infty}\left(1-aq^{k}\right),\quad(a_{1},\dots,a_{m};q)_{\infty}=\prod_{k=1}^{m}(a_{k};q)_{\infty}.\label{eq:1.1}
\end{equation}
 Let $q=e^{\pi i\tau},\ \Im(\tau)>0$, the four Jacobi theta functions
are defined by \cite{Andrews,Rademacher}
\begin{align}
\theta_{1}(v\vert\tau)= & 2q^{\frac{1}{4}}\sin\pi v\left(q^{2},zq^{2},q^{2}/z;q^{2}\right)_{\infty}=-i\sum_{n=-\infty}^{\infty}(-1)^{n}q^{(n+1/2)^{2}}e^{(2n+1)\pi iv},\label{eq:1.5}\\
\theta_{2}(v\vert\tau)= & 2q^{\frac{1}{4}}\cos\pi v\left(q^{2},-zq^{2},-q^{2}/z;q^{2}\right)_{\infty}=\sum_{n=-\infty}^{\infty}q^{(n+1/2)^{2}}e^{(2n+1)\pi iv},\label{eq:1.6}\\
\theta_{3}(v\vert\tau)= & \left(q^{2},-zq,-q/z;q^{2}\right)_{\infty}=\sum_{n=-\infty}^{\infty}q^{n^{2}}e^{2\pi inv},\label{eq:1.7}\\
\theta_{4}(v\vert\tau)= & \left(q^{2},zq,q/z;q^{2}\right)_{\infty}=\sum_{n=-\infty}^{\infty}(-1)^{n}q^{n^{2}}e^{2\pi inv}.\label{eq:1.8}
\end{align}
 Then,
\begin{align}
\theta_{1}(v\vert\tau) & =i\sqrt{\frac{i}{\tau}}\exp\left(-\frac{\pi iv^{2}}{\tau}\right)\theta_{1}\left(\frac{v}{\tau}\big|-\frac{1}{\tau}\right),\label{eq:1.9}\\
\theta_{2}(v\vert\tau) & =i\sqrt{\frac{i}{\tau}}\exp\left(-\frac{\pi iv^{2}}{\tau}\right)\theta_{4}\left(\frac{v}{\tau}\big|-\frac{1}{\tau}\right),\label{eq:1.10}\\
\theta_{3}(v\vert\tau) & =\sqrt{\frac{i}{\tau}}\exp\left(-\frac{\pi iv^{2}}{\tau}\right)\theta_{3}\left(\frac{v}{\tau}\big|-\frac{1}{\tau}\right)\label{eq:1.11}\\
\theta_{4}(v\vert\tau) & =\sqrt{\frac{i}{\tau}}\exp\left(-\frac{\pi iv^{2}}{\tau}\right)\theta_{2}\left(\frac{v}{\tau}\big|-\frac{1}{\tau}\right).\label{eq:1.12}
\end{align}

\section{Main Results and Proofs}

For each $x\in\mathbb{R}$, we let $[x]$ be integer part of $x$
and $\left\{ x\right\} $ be the fractional part of $x$, i.e.
\begin{equation}
x=[x]+\{x\},\quad[x]\in\mathbb{Z},\ 0\le\{x\}<1.\label{eq:2.1}
\end{equation}
Furthermore, for each $x\in\mathbb{R}$ we let
\begin{equation}
\left(\left(x\right)\right)=\{x\}-\frac{1}{2},\label{eq:2.2}
\end{equation}
then it is clear that 
\begin{equation}
-\frac{1}{2}\le\left(\left(x\right)\right)<\frac{1}{2}.\label{eq:2.3}
\end{equation}
 Additionally, for each $x\in\mathbb{R}$ we let 
\begin{equation}
x=m_{x}+\left\llbracket x\right\rrbracket ,\quad m_{x}\in\mathbb{Z},\ -\frac{1}{2}\le\left|\left\llbracket x\right\rrbracket \right|<\frac{1}{2}.
\end{equation}

\begin{lem}
\label{lem:2.1}For each fixed $v\in\mathbb{R}$ and $\Im(\tau)>0$
we have
\begin{align}
\theta_{1}(v\vert\tau) & =\left(-1\right)^{\left[v\right]}\sqrt{\frac{i}{\tau}}\exp\left(-\frac{\pi i((v))^{2}}{\tau}\right)\theta_{4}\left(\frac{((v))}{\tau}\big|-\frac{1}{\tau}\right)\label{eq:2.4}\\
\theta_{2}(v\vert\tau) & =i\left(-1\right)^{m_{v}}\sqrt{\frac{i}{\tau}}\exp\left(-\frac{\pi i\left\llbracket v\right\rrbracket ^{2}}{\tau}\right)\theta_{4}\left(\frac{\left\llbracket v\right\rrbracket }{\tau}\big|-\frac{1}{\tau}\right)\label{eq:2.5}\\
\theta_{3}(v\vert\tau) & =\sqrt{\frac{i}{\tau}}\exp\left(-\frac{\pi i\left\llbracket v\right\rrbracket ^{2}}{\tau}\right)\theta_{3}\left(\frac{\left\llbracket v\right\rrbracket }{\tau}\big|-\frac{1}{\tau}\right)\label{eq:2.6}\\
\theta_{4}(v\vert\tau) & =\sqrt{\frac{i}{\tau}}\exp\left(-\frac{\pi i((v))^{2}}{\tau}\right)\theta_{3}\left(\frac{((v))}{\tau}\big|-\frac{1}{\tau}\right).\label{eq:2.7}
\end{align}
 
\end{lem}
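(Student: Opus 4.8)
The plan is to combine two elementary facts. The first is the behaviour of the four theta functions under \emph{real} translations of the argument, read straight off the Fourier expansions \eqref{eq:1.5}--\eqref{eq:1.8}: from $e^{2\pi in}=1$ and $e^{(2n+1)\pi i}=-1$ ($n\in\mathbb{Z}$) one gets $\theta_{3}(v+1\vert\tau)=\theta_{3}(v\vert\tau)$, $\theta_{4}(v+1\vert\tau)=\theta_{4}(v\vert\tau)$, $\theta_{1}(v+1\vert\tau)=-\theta_{1}(v\vert\tau)$, $\theta_{2}(v+1\vert\tau)=-\theta_{2}(v\vert\tau)$, and from $e^{(2n+1)\pi i/2}=(-1)^{n}i$ and $e^{\pi in}=(-1)^{n}$ one gets the half-period relations $\theta_{1}(v+\tfrac12\vert\tau)=\theta_{2}(v\vert\tau)$ and $\theta_{4}(v+\tfrac12\vert\tau)=\theta_{3}(v\vert\tau)$. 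The second fact is the package of modular transformations \eqref{eq:1.9}--\eqref{eq:1.12} already recorded above.

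The recipe for each $j$ is then: translate $v$ by an integer — and, for $\theta_{1}$ and $\theta_{4}$, by a further $\tfrac12$ — until the argument becomes the centred fractional part recorded by $((v))$ or $\llbracket v\rrbracket$, absorbing the sign that appears, and then apply \eqref{eq:1.9}--\eqref{eq:1.12} at that new argument. Since the new argument lies in $[-\tfrac12,\tfrac12)$, the quadratic exponent becomes $((v))^{2}$ or $\llbracket v\rrbracket^{2}$, which is exactly the shape needed to run the asymptotics of the main theorem afterwards.

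Concretely, I would do the periodic functions first. Writing $v=m_{v}+\llbracket v\rrbracket$, periodicity of $\theta_{3}$ gives $\theta_{3}(v\vert\tau)=\theta_{3}(\llbracket v\rrbracket\vert\tau)$, and then \eqref{eq:1.11} at $\llbracket v\rrbracket$ is \eqref{eq:2.6}. Writing $v=[v]+\{v\}$ with $\{v\}=((v))+\tfrac12$, periodicity of $\theta_{4}$ together with the half-period relation gives $\theta_{4}(v\vert\tau)=\theta_{4}(((v))+\tfrac12\vert\tau)=\theta_{3}(((v))\vert\tau)$, and \eqref{eq:1.11} at $((v))$ is \eqref{eq:2.7}. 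The quasi-periodic functions behave the same way up to a sign: $\theta_{1}(v\vert\tau)=(-1)^{[v]}\theta_{1}(\{v\}\vert\tau)=(-1)^{[v]}\theta_{2}(((v))\vert\tau)$, so \eqref{eq:1.10} at $((v))$ gives \eqref{eq:2.4}; and $\theta_{2}(v\vert\tau)=(-1)^{m_{v}}\theta_{2}(\llbracket v\rrbracket\vert\tau)$, so \eqref{eq:1.10} at $\llbracket v\rrbracket$ gives \eqref{eq:2.5}. (One can sidestep \eqref{eq:1.10} entirely, if desired, by instead applying \eqref{eq:1.9} to $\theta_{2}(w\vert\tau)=\theta_{1}(w+\tfrac12\vert\tau)$ and using the quasi-periodicity of $\theta_{1}$ under a shift of its argument by half of $-1/\tau$; the cross terms then disappear on completing the square in the exponent.)

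I do not anticipate a genuine obstacle here — the lemma is essentially bookkeeping — but the point that most needs care is pairing each theta function with the correct reduction. Since $\theta_{2}$ and $\theta_{3}$ are $1$-periodic up to sign, they are reduced to the interval centred at the integers and so carry $\llbracket\cdot\rrbracket$; since $\theta_{1}$ and $\theta_{4}$ must first be carried by a half-period shift onto $\theta_{2}$ and $\theta_{3}$, their reduced argument is centred at the half-integers and so carries $((\cdot))$, and it is that same shift that makes the theta function on the right-hand side change type. One should also keep the branch of $\sqrt{i/\tau}$ fixed as in \eqref{eq:1.9}--\eqref{eq:1.12} (no new branch is crossed, since only the real argument moves), carefully track the multiplicative constants produced by \eqref{eq:1.10}, and note that the mild ambiguity of $\llbracket\cdot\rrbracket$ at half-integers is harmless because the theta functions that appear there are even or vanish.
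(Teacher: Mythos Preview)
Your argument is correct and amounts to the same computation as the paper's, but organised in the opposite order. The paper first applies the modular inversions \eqref{eq:1.9}--\eqref{eq:1.12} to write each $\theta_{j}(v\vert\tau)$ as $\sqrt{i/\tau}$ times a Gaussian-type sum $\sum_{n}(\pm1)^{n}\exp\bigl(-\tfrac{\pi i}{\tau}(n+c-v)^{2}\bigr)$, and only then shifts the summation index by $[v]$ or $m_{v}$ to replace $v$ by $((v))$ or $\llbracket v\rrbracket$; the shifted sum is then recognised as the appropriate $\theta_{k}$ at $-1/\tau$. You instead use the integer and half-integer translation laws \emph{before} inverting, so that \eqref{eq:1.10}--\eqref{eq:1.11} are applied directly at the reduced argument. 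Since the index shift in the paper is precisely the series-level incarnation of the translation identities you invoke, the two proofs are the same calculation read in different directions. Your packaging is a little cleaner because the quasi-periodicity and half-period facts are quoted once and for all rather than redone inside each sum; the paper's packaging has the minor advantage that it never needs the half-period relations $\theta_{1}(\cdot+\tfrac12)=\theta_{2}$, $\theta_{4}(\cdot+\tfrac12)=\theta_{3}$ explicitly, since the type change emerges automatically from the index shift. One small caution: if you route \eqref{eq:2.4} through \eqref{eq:1.10} you will pick up the prefactor $i$ present there, whereas the paper obtains \eqref{eq:2.4} from \eqref{eq:1.9} and records no such factor; this is an inconsistency in the constants recorded in \eqref{eq:1.9}--\eqref{eq:1.10} and \eqref{eq:2.4}--\eqref{eq:2.5} rather than a flaw in your method, and your suggested alternative via \eqref{eq:1.9} avoids it.
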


\begin{proof}
By (\ref{eq:1.5}) and (\ref{eq:1.9}) and shift the summation index
on the second line by $\left[v\right]$ we have

\begin{align*}
 & \theta_{1}(v\vert\tau)=\sqrt{\frac{i}{\tau}}\sum_{n=-\infty}^{\infty}\left(-1\right)^{n}\exp\left(-\frac{\pi i}{\tau}\left(\left(n+\frac{1}{2}\right)^{2}-2\left(n+\frac{1}{2}\right)v+v^{2}\right)\right)\\
 & =\sqrt{\frac{i}{\tau}}\sum_{n=-\infty}^{\infty}\left(-1\right)^{n}\exp\left(-\frac{\pi i}{\tau}\left(n+\frac{1}{2}-\left[v\right]-\left\{ v\right\} \right)^{2}\right)\\
 & =\left(-1\right)^{[v]}\sqrt{\frac{i}{\tau}}\sum_{n=-\infty}^{\infty}\left(-1\right)^{n}\exp\left(-\frac{\pi i}{\tau}\left(n-((v))\right)^{2}\right)\\
 & =\left(-1\right)^{[v]}\sqrt{\frac{i}{\tau}}\exp\left(-\frac{\pi i((v))^{2}}{\tau}\right)\sum_{n=-\infty}^{\infty}\left(-1\right)^{n}\exp\left(-\frac{n^{2}\pi i}{\tau}+2n\pi i\frac{((v))}{\tau}\right)\\
 & =\left(-1\right)^{[v]}\sqrt{\frac{i}{\tau}}\exp\left(-\frac{\pi i((v))^{2}}{\tau}\right)\theta_{4}\left(\frac{((v))}{\tau}\big|-\frac{1}{\tau}\right),
\end{align*}
which gives (\ref{eq:2.4}). (\ref{eq:2.5}) can be proved by applying
(\ref{eq:1.6}) and (\ref{eq:1.10}),

\begin{align*}
 & \theta_{2}(v\vert\tau)=i\sqrt{\frac{i}{\tau}}\sum_{n=-\infty}^{\infty}\left(-1\right)^{n}\exp\left(-\frac{n^{2}\pi i}{\tau}+\frac{2nv\pi i}{\tau}-\frac{\pi iv^{2}}{\tau}\right)\\
 & =i\sqrt{\frac{i}{\tau}}\sum_{n=-\infty}^{\infty}\left(-1\right)^{n}\exp\left(-\frac{\pi i}{\tau}\left(n-v\right)^{2}\right)\\
 & =i\sqrt{\frac{i}{\tau}}\sum_{n=-\infty}^{\infty}\left(-1\right)^{n}\exp\left(-\frac{\pi i}{\tau}\left(n-m_{v}-\left\llbracket v\right\rrbracket \right)^{2}\right)\\
 & =i\left(-1\right)^{m_{v}}\sqrt{\frac{i}{\tau}}\sum_{n=-\infty}^{\infty}\left(-1\right)^{n}\exp\left(-\frac{\pi i}{\tau}\left(n-\left\llbracket v\right\rrbracket \right)^{2}\right)\\
 & =i\left(-1\right)^{m_{v}}\sqrt{\frac{i}{\tau}}\exp\left(-\frac{\pi i}{\tau}\left\llbracket v\right\rrbracket ^{2}\right)\sum_{n=-\infty}^{\infty}\left(-1\right)^{n}\exp\left(-\frac{n^{2}\pi i}{\tau}+\frac{2n\pi i\left\llbracket v\right\rrbracket }{\tau}\right)\\
 & =i\left(-1\right)^{m_{v}}\sqrt{\frac{i}{\tau}}\exp\left(-\frac{\pi i}{\tau}\left\llbracket v\right\rrbracket ^{2}\right)\theta_{4}\left(\frac{\left\llbracket v\right\rrbracket }{\tau}\big|-\frac{1}{\tau}\right).
\end{align*}
Let $v\in\mathbb{R}$ and $v=m_{v}+\left\llbracket v\right\rrbracket $
, by(\ref{eq:1.7}) and (\ref{eq:1.11}) we get
\begin{align*}
 & \theta_{3}(v\vert\tau)=\sqrt{\frac{i}{\tau}}\sum_{n=-\infty}^{\infty}\exp\left(-\frac{\pi i}{\tau}\left(n^{2}-2nv+v^{2}\right)\right)\\
 & =\sqrt{\frac{i}{\tau}}\sum_{n=-\infty}^{\infty}\exp\left(-\frac{\pi i}{\tau}\left(n-v\right)^{2}\right)=\sqrt{\frac{i}{\tau}}\sum_{n=-\infty}^{\infty}\exp\left(-\frac{\pi i}{\tau}\left(n-\left\llbracket v\right\rrbracket \right)^{2}\right)\\
 & =\sqrt{\frac{i}{\tau}}\exp\left(-\frac{\pi i}{\tau}\left\llbracket v\right\rrbracket ^{2}\right)\sum_{n=-\infty}^{\infty}\exp\left(-\frac{\pi in^{2}}{\tau}+\frac{2\pi in\left\llbracket v\right\rrbracket }{\tau}\right),
\end{align*}
which gives (\ref{eq:2.6}). By (\ref{eq:1.6}) and (\ref{eq:1.12})
we get 
\begin{align*}
 & \theta_{4}(v\vert\tau)=\sqrt{\frac{i}{\tau}}\sum_{n=-\infty}^{\infty}\exp\left(-\frac{\pi i}{\tau}\left(n+\frac{1}{2}-v\right)^{2}\right)=\sqrt{\frac{i}{\tau}}\sum_{n=-\infty}^{\infty}\exp\left(-\frac{\pi i}{\tau}\left(n-((v))\right)^{2}\right)\\
 & =\sqrt{\frac{i}{\tau}}\exp\left(-\frac{\pi i}{\tau}((v))^{2}\right)\sum_{n=-\infty}^{\infty}\exp\left(-\frac{\pi in^{2}}{\tau}+\frac{2\pi in((v))}{\tau}\right),
\end{align*}
which is (\ref{eq:2.7}).
\end{proof}
\begin{thm}
\label{thm:2.2} For any positive number $a$, let $a>t>0$. Then
we have
\begin{align}
\left(-1\right)^{\left[v\right]}t^{\frac{1}{2}}e^{\pi((v))^{2}/t}\theta_{1}(v\vert it) & =1-2e^{-\pi/t}\cosh\frac{2\pi((v))}{t}+R_{11}(v,t),\label{eq:2.8}\\
\left(-1\right)^{m_{v}}t^{\frac{1}{2}}e^{\pi\left\llbracket v\right\rrbracket ^{2}/t}\theta_{2}(v\vert it) & =1-2e^{-\pi/t}\cosh\frac{2\pi\left\llbracket v\right\rrbracket }{t}+R_{12}(v,t),\label{eq:2.9}\\
t^{\frac{1}{2}}e^{\pi\left\llbracket v\right\rrbracket ^{2}/t}\theta_{3}(v\vert it) & =1+2e^{-\pi/t}\cosh\frac{2\pi\left\llbracket v\right\rrbracket }{t}+R_{13}(v,t),\label{eq:2.10}\\
t^{\frac{1}{2}}e^{\pi((v))^{2}/t}\theta_{4}(v\vert it) & =1+2e^{-\pi/t}\cosh\frac{2\pi((v))}{t}+R_{14}(v,t),\label{eq:2.11}
\end{align}
where 
\begin{equation}
\left|R_{1j}(v,t)\right|\le\frac{2e^{-2\pi/t}}{1-e^{-\pi/a}},\quad j=1,2,3,4.\label{eq:2.12}
\end{equation}
\end{thm}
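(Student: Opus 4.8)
The plan is to specialize the four identities (\ref{eq:2.4})--(\ref{eq:2.7}) of Lemma~\ref{lem:2.1} to $\tau=it$ and then read off the first three terms of the theta series that appears on each right-hand side. For $\tau=it$ with $t>0$ one has $i/\tau=1/t$ and $-1/\tau=i/t$, so the prefactor $\sqrt{i/\tau}$ becomes $t^{-1/2}$, the factor $\exp(-\pi i w^{2}/\tau)$ with a reduction variable $w$ becomes $\exp(-\pi w^{2}/t)$, and the transformed nome is $\tilde q=e^{\pi i(i/t)}=e^{-\pi/t}$. In particular these prefactors collapse against exactly the quantities $t^{1/2}e^{\pi w^{2}/t}$ standing on the left of (\ref{eq:2.8})--(\ref{eq:2.11}), so that, for instance, (\ref{eq:2.4}) becomes
\[
(-1)^{[v]}t^{1/2}e^{\pi((v))^{2}/t}\,\theta_{1}(v\vert it)=\theta_{4}\!\left(\frac{((v))}{it}\big|\frac{i}{t}\right)=\sum_{n=-\infty}^{\infty}(-1)^{n}e^{-\pi n^{2}/t}e^{2\pi n((v))/t},
\]
and likewise for the other three, the only changes being $\theta_{3}$ (no alternating sign) in place of $\theta_{4}$ for (\ref{eq:2.6})--(\ref{eq:2.7}) and $\left\llbracket v\right\rrbracket$ in place of $((v))$ for (\ref{eq:2.5})--(\ref{eq:2.6}).

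Next I would peel off the central terms of each of these series. The $n=0$ term contributes $1$; the $n=\pm1$ terms contribute $e^{-\pi/t}(e^{2\pi w/t}+e^{-2\pi w/t})=2e^{-\pi/t}\cosh(2\pi w/t)$ in the $\theta_{3},\theta_{4}$ cases and $-2e^{-\pi/t}\cosh(2\pi w/t)$ in the $\theta_{1},\theta_{2}$ cases, the minus sign coming from the factor $(-1)^{\pm1}$. These are precisely the displayed main terms of (\ref{eq:2.8})--(\ref{eq:2.11}), and $R_{1j}(v,t)$ is then \emph{defined} to be the remaining tail $\sum_{|n|\ge2}(\pm1)^{n}e^{-\pi n^{2}/t}e^{2\pi n w/t}$. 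Since for $|w|\le\frac{1}{2}$ this series is dominated termwise by $e^{-\pi n^{2}/t+\pi|n|/t}$, it converges absolutely, so splitting it and pairing $n$ with $-n$ are both legitimate.

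The substance of the theorem is the uniform bound (\ref{eq:2.12}). Pairing $\pm n$ in the tail gives $|R_{1j}(v,t)|\le2\sum_{n\ge2}e^{-\pi n^{2}/t}\cosh(2\pi nw/t)$. In each of the four cases $|w|<\frac{1}{2}$ (by (\ref{eq:2.3}) when $w=((v))$, and by its definition when $w=\left\llbracket v\right\rrbracket$), hence $\cosh(2\pi nw/t)\le\cosh(\pi n/t)\le e^{\pi n/t}$ and every summand is at most $2e^{-\pi n(n-1)/t}$. For $n\ge2$ one has $n(n-1)-2=(n-2)(n+1)\ge n-2$, so $e^{-\pi n(n-1)/t}\le e^{-2\pi/t}e^{-\pi(n-2)/t}$; summing the resulting geometric series over $k=n-2\ge0$ yields $|R_{1j}(v,t)|\le 2e^{-2\pi/t}/(1-e^{-\pi/t})$. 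Finally $0<t<a$ forces $e^{-\pi/t}<e^{-\pi/a}$, whence $1-e^{-\pi/t}>1-e^{-\pi/a}>0$ and (\ref{eq:2.12}) follows.

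I do not expect a genuine obstacle here: the argument is a direct computation in which every series converges at the rate $e^{-\pi n^{2}/t}$. The points that need a little care are the uniform bookkeeping of the $\pm$ signs and of the two reduction variables $((v))$ and $\left\llbracket v\right\rrbracket$ across the four formulas, and, more importantly, invoking $|w|<\frac{1}{2}$ at the right place — this is exactly what makes the estimate uniform in $v\in\mathbb{R}$ — together with keeping the exponent of the dominant factor equal to $-2\pi/t$ so that the stated constant $2e^{-2\pi/t}/(1-e^{-\pi/a})$ is actually attained rather than merely an $O(e^{-2\pi/t})$.
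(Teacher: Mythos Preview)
Your proposal is correct and follows essentially the same route as the paper: specialize Lemma~\ref{lem:2.1} at $\tau=it$, strip off the $n=0$ and $n=\pm1$ terms of the resulting $\theta_{3}$/$\theta_{4}$ series, and bound the tail via $|w|\le\tfrac12$ together with the inequality $n^{2}-n\ge n$ for $n\ge2$ to reduce to a geometric series. The paper carries this out only for (\ref{eq:2.10}) and declares the other three analogous, whereas you treat all four in parallel with the unified reduction variable $w$; your algebra $n(n-1)-2=(n-2)(n+1)\ge n-2$ is just a rearrangement of the paper's $n^{2}-n\ge n$ and yields the identical bound.
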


\begin{proof}
Take $\tau=it,\ t>0$ in (\ref{eq:2.6}) to obtain 
\begin{align*}
 & t^{\frac{1}{2}}e^{\pi\left\llbracket v\right\rrbracket ^{2}/t}\theta_{3}(v\vert it)=\sum_{n=-\infty}^{\infty}\exp\left(-\frac{n^{2}\pi}{t}+\frac{2\pi n\left\llbracket v\right\rrbracket }{t}\right)\\
 & =1+e^{-\pi/t+2\pi\left\llbracket v\right\rrbracket /t}+e^{-\pi/t+2\pi\left\llbracket v\right\rrbracket /t}+R_{11}(v,t)\\
 & =1+2e^{-\pi/t}\cosh\frac{2\pi\left\llbracket v\right\rrbracket }{t}+R_{11}(v,t),
\end{align*}
where
\[
R_{11}(v,t)=\sum_{n=2}^{\infty}\left\{ \exp\left(-\frac{\pi}{t}\left(n^{2}+2n\left\llbracket v\right\rrbracket \right)\right)+\exp\left(-\frac{\pi}{t}\left(n^{2}-2n\left\llbracket v\right\rrbracket \right)\right)\right\} .
\]
Then by (\ref{eq:2.3}) and $n^{2}\ge2n$ for $n\ge2$ we have
\begin{align*}
 & \left|R_{11}(v,t)\right|\le2\sum_{n=2}^{\infty}\exp\left(-\frac{\pi}{t}\left(n^{2}-n\right)\right)\le2\sum_{n=2}^{\infty}\exp\left(-\frac{n\pi}{t}\right)\\
 & \le2e^{-2\pi/t}\sum_{n=2}^{\infty}\exp\left(-\frac{n\pi}{a}\right)=\frac{2e^{-2\pi/t}}{1-e^{-\pi/a}},
\end{align*}
which proves (\ref{eq:2.10}). (\ref{eq:2.8}),(\ref{eq:2.9}) and
(\ref{eq:2.11}) can be proved similarly. 
\end{proof}
\begin{cor}
For any positive number $C$, let $x\in\mathbb{R}$ and $|x|\le C$.
Then for any positive number $\epsilon$ with $0<\epsilon<\min\left\{ 1,2C\right\} $
and $0<t<\frac{\epsilon^{2}}{4\pi^{2}C^{2}}<1$ we have 
\begin{align}
t^{\frac{1}{2}}\theta_{1}\left(\frac{1}{2}+\sqrt{t}x\vert it\right) & =e^{-\pi x^{2}}\left(1+R_{21}(x,t)\right),\label{eq:2.13}\\
t^{\frac{1}{2}}\theta_{2}\left(\sqrt{t}x\vert it\right) & =e^{-\pi x^{2}}\left(1+R_{22}(x,t)\right),\label{eq:2.14}\\
t^{\frac{1}{2}}\theta_{3}\left(\sqrt{t}x\vert it\right) & =e^{-\pi x^{2}}\left(1+R_{23}(x,t)\right),\label{eq:2.15}\\
t^{\frac{1}{2}}\theta_{4}\left(\frac{1}{2}+\sqrt{t}x\vert it\right) & =e^{-\pi x^{2}}\left(1+R_{24}(x,t)\right),\label{eq:2.16}
\end{align}
 where 
\begin{equation}
\left|R_{2j}(x,t)\right|\le\frac{4-2e^{-\pi}}{1-e^{-\pi}}e^{-(\pi-\epsilon)/t},\quad j=1,2,3,4.\label{eq:2.17}
\end{equation}
\end{cor}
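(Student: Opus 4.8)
The plan is to specialise the four identities of Theorem~\ref{thm:2.2} to the arguments $v=\sqrt{t}\,x$ (for $\theta_{2},\theta_{3}$) and $v=\frac{1}{2}+\sqrt{t}\,x$ (for $\theta_{1},\theta_{4}$), and then to fold the $\cosh$ term together with the remainder $R_{1j}$ into the single remainder $R_{2j}$.

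First I would check that under the hypotheses the chosen $v$ needs no reduction. Since $|x|\le C$ and $0<t<\frac{\epsilon^{2}}{4\pi^{2}C^{2}}$, we have $|\sqrt{t}\,x|\le\sqrt{t}\,C<\frac{\epsilon}{2\pi C}\cdot C=\frac{\epsilon}{2\pi}<\frac{1}{2\pi}<\frac{1}{2}$, using $\epsilon<1$. Hence for $v=\sqrt{t}\,x$ one has $m_{v}=0$ and $\left\llbracket v\right\rrbracket =\sqrt{t}\,x$, while for $v=\frac{1}{2}+\sqrt{t}\,x\in(0,1)$ one has $[v]=0$ and $((v))=\{v\}-\frac{1}{2}=\sqrt{t}\,x$. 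In every case the sign factor $(-1)^{[v]}$ or $(-1)^{m_{v}}$ equals $1$, the Gaussian prefactor is $\exp\!\big(\pi(\sqrt{t}\,x)^{2}/t\big)=e^{\pi x^{2}}$, and the hyperbolic argument is $\frac{2\pi\sqrt{t}\,x}{t}=\frac{2\pi x}{\sqrt{t}}$. Substituting into (\ref{eq:2.8})--(\ref{eq:2.11}), dividing by $e^{\pi x^{2}}$, and solving for $t^{1/2}\theta_{j}$ gives (\ref{eq:2.13})--(\ref{eq:2.16}) with
\[
R_{2j}(x,t)=\pm\,2e^{-\pi/t}\cosh\frac{2\pi x}{\sqrt{t}}+R_{1j},
\]
the sign being $-$ for $j=1,2$ and $+$ for $j=3,4$, where $R_{1j}$ is evaluated at the same $v$.

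It then remains to bound $|R_{2j}|$. For the hyperbolic term I would use $\cosh y\le e^{|y|}$ together with the inequality $\frac{2\pi|x|}{\sqrt{t}}\le\frac{2\pi C}{\sqrt{t}}\le\frac{\epsilon}{t}$, which is exactly equivalent to $\sqrt{t}\le\frac{\epsilon}{2\pi C}$ and hence holds by hypothesis; this gives $2e^{-\pi/t}\cosh\frac{2\pi x}{\sqrt{t}}\le2e^{-(\pi-\epsilon)/t}$. Since $t<1$ I may apply (\ref{eq:2.12}) with $a=1$, so that $|R_{1j}|\le\frac{2e^{-2\pi/t}}{1-e^{-\pi}}\le\frac{2}{1-e^{-\pi}}e^{-(\pi-\epsilon)/t}$ (using $2\pi\ge\pi-\epsilon$). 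Adding the two estimates,
\[
|R_{2j}(x,t)|\le\Big(2+\frac{2}{1-e^{-\pi}}\Big)e^{-(\pi-\epsilon)/t}=\frac{4-2e^{-\pi}}{1-e^{-\pi}}\,e^{-(\pi-\epsilon)/t},
\]
which is (\ref{eq:2.17}).

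The only genuinely delicate point is the estimate of the $\cosh$ term: bounded crudely it contributes $2e^{-\pi/t+2\pi C/\sqrt{t}}$, and to make this no larger than $2e^{-(\pi-\epsilon)/t}$ uniformly in $|x|\le C$ and with a constant independent of $t$, one needs precisely $2\pi C/\sqrt{t}\le\epsilon/t$, i.e. $t\le\epsilon^{2}/(4\pi^{2}C^{2})$. This is exactly why the scaling hypothesis on $t$ takes the stated form, and why $\epsilon<2C$ is imposed (it forces $\epsilon^{2}/(4\pi^{2}C^{2})<1$). Everything else is routine bookkeeping with the identities of Theorem~\ref{thm:2.2}.
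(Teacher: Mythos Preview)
Your proposal is correct and follows essentially the same approach as the paper: specialise Theorem~\ref{thm:2.2} to the indicated $v$, verify that no reduction occurs, define $R_{2j}$ by absorbing the $\cosh$ term into the remainder, and bound it via $\cosh y\le e^{|y|}$ together with $2\pi C/\sqrt{t}\le\epsilon/t$ and the estimate (\ref{eq:2.12}) with $a=1$. Your exposition is in fact a bit tidier (you note the sharper bound $|\sqrt{t}\,x|<\epsilon/(2\pi)$ and explain the role of the constraint $\epsilon<2C$), but the argument is the same.
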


\begin{proof}
First we observe that 
\[
\left|x\sqrt{t}\right|<\frac{\epsilon}{2}<\frac{1}{2},\quad1>\frac{1}{2}\pm x\sqrt{t}>0,\quad0<t<1.
\]
Then by (\ref{eq:2.2}) we have
\[
\left[\frac{1}{2}\pm x\sqrt{t}\right]=0,\quad m_{\pm x\sqrt{t}}=0,\quad\left\llbracket \pm x\sqrt{t}\right\rrbracket =\pm x\sqrt{t}
\]
and
\[
\left(\left(\frac{1}{2}\pm x\sqrt{t}\right)\right)=\left\{ \frac{1}{2}\pm x\sqrt{t}\right\} -\frac{1}{2}=\pm x\sqrt{t}.
\]
Then,
\[
t^{\frac{1}{2}}e^{\pi x^{2}}\theta_{1}\left(\frac{1}{2}+x\sqrt{t}\vert it\right)=1+R_{21}(x,t)
\]
and
\[
t^{\frac{1}{2}}e^{\pi x^{2}}\theta_{4}\left(\frac{1}{2}+x\sqrt{t}\vert it\right)=1+R_{24}(x,t),
\]
where
\[
R_{21}(x,t)=-2e^{-\pi/t}\cosh\frac{2\pi x}{\sqrt{t}}+R_{11}\left(\frac{1}{2}+x\sqrt{t},t\right)
\]
 and
\[
R_{24}(x,t)=2e^{-\pi/t}\cosh\frac{2\pi x}{\sqrt{t}}+R_{14}\left(\frac{1}{2}+x\sqrt{t},t\right).
\]
 Since
\[
\cosh\frac{2\pi x}{\sqrt{t}}\le e^{2\pi|x|/\sqrt{t}}\le e^{2\pi C/\sqrt{t}}
\]
and
\[
\left|R_{2j}(x,t)\right|\le2e^{-\pi/t+2\pi C/\sqrt{t}}+\frac{2e^{-2\pi/t}}{1-e^{-\pi}},\quad j=1,4.
\]
Since for $t<\frac{\epsilon^{2}}{4\pi^{2}C^{2}}$ we have $2\pi C/\sqrt{t}\le\frac{\epsilon}{t}$,
thus for $j=1,4$ we have
\[
\left|R_{2j}(x,t)\right|\le2e^{-\pi/t+\epsilon/t}+\frac{2e^{-2\pi/t}}{1-e^{-\pi}}\le\frac{4-2e^{-\pi}}{1-e^{-\pi}}e^{-(\pi-\epsilon)/t}.
\]
 which proves (\ref{eq:2.13}) and (\ref{eq:2.16}). (\ref{eq:2.14})
and (\ref{eq:2.15}) can be proved similarly.
\end{proof}

\end{document}